\newtheorem{theorem}{Theorem}[section]
\newtheorem{lemma}[theorem]{Lemma}
\newtheorem{proposition}[theorem]{Proposition}
\newtheorem{problem}[theorem]{Problem}
\theoremstyle{definition}
\newtheorem{definition}[theorem]{Definition}
\newtheorem{remark}[theorem]{Remark}
\begin{document}

\title[Varieties of bicommutative algebras]
{Noetherianity and Specht problem\\
for varieties of bicommutative algebras}
\author{Vesselin Drensky, Bekzat K. Zhakhayev}
\date{}
\address{Institute of Mathematics and Informatics,
Bulgarian Academy of Sciences,
Acad. G. Bonchev Str., Block 8,
1113 Sofia, Bulgaria}
\email{drensky@math.bas.bg}
\address{Faculty of Engineering and Natural Sciences,
S\"uleyman Demirel University, 040900 Kaskelen, Almaty,
Kazakhstan}
\email{bekzat22@hotmail.com}

\thanks
{The research of the first named author was partially supported by Grant I02/18
``Computational and Combinatorial Methods
in Algebra and Applications''
of the Bulgarian National Science Fund.}
\thanks
{The research of the second named author was supported by Grant No. 0828/GF4 of the Ministry of Education and Science
of the Republic of Kazakhstan.}

\subjclass[2010]
{17A30; 17A50.}
\keywords{Free bicommutative algebras, varieties of bicommutative algebras, weak noetherianity, Specht problem.}
\maketitle

{\bf Abstract.}
Nonassociative algebras satisfying the polynomial identities
\[
x_1(x_2x_3)=x_2(x_1x_3)\text{ and }(x_1x_2)x_3=(x_1x_3)x_2
\]
are called bicommutative.
We prove the following results: (i) Finitely generated bicommutative algebras are weakly noetherian,
i.e., satisfy the ascending chain condition for two-sided ideals.
(ii) We give the positive solution to the Specht problem (or the finite basis problem)
for varieties of  bicommutative algebras over an arbitrary field of any characteristic.

\section{Introduction}

Let $K$ be a field. A $K$-algebra $R$ is called {\it right-commutative} if it satisfies the polynomial identity
\[
(x_1x_2)x_3=(x_1x_3)x_2,
\]
i.e., $(r_1r_2)r_3=(r_1r_3)r_2$ for all $r_1,r_2,r_3\in R$. Similarly one defines {\it left-commutative} algebras as algebras which satisfy the identity
\[
x_1(x_2x_3)=x_2(x_1x_3).
\]
Algebras which are both right- and left-commutative are called {\it bicommutative}.
We denote by $\mathfrak B$ the variety of all bicommutative algebras, i.e., the class of all algebras satisfying the identities of
right- and left-commutativity.
The first example of a  one-sided commutative algebra is the right-symmetric Witt algebra $W_1^{\text{rsym}}$ in one variable
which appeared already in the paper by Cayley \cite{Ca} in 1857,
\[
W_1^{\text{rsym}}=\left\{f\frac{d}{dx}\mid f\in K[x]\right\}
\]
equipped with the multiplication
\[
\left(f_1\frac{d}{dx}\right)\ast\left(f_2\frac{d}{dx}\right)
=\left(f_2\frac{df_1}{dx}\right)\frac{d}{dx}
\]
which is left-commutative. (Right-symmetric algebras satisfy the polynomial identity $(x_1,x_2,x_3)=(x_1,x_3,x_2)$, where
$(x_1,x_2,x_3)=(x_1x_2)x_3-x_1(x_2x_3)$ is the associator.)
Cayley also considered the realization of the right-symmetric Witt algebras $W_d^{\text{rsym}}$ in terms of rooted trees.
The algebra $W_1^{\text{rsym}}$ is an example of a Novikov algebra (which is right-symmetric and left-commutative).
(The algebras $W_d^{\text{rsym}}$ are not Novikov for $d>1$ because are right-symmetric but are not left-commutative.)
Novikov algebras and their opposite appeared in the 1970s and 1980s
in the papers by Gel'fand and Dorfman \cite{GD} in their study of the Hamiltonian
operator in finite-dimensional mechanics and by Balinskii and Novikov \cite{BN}, see also \cite{N},
in relation with the equations of hydrodynamics. The basis of the free right-commutative algebras as a set of rooted trees
was given by Dzhumadil'daev and L\"ofwall \cite{DL}.
Dzhumadil'daev, Ismailov, and Tulenbaev \cite{DIT}, see also the announcement \cite{DT},
described the free bicommutative algebra $F({\mathfrak B})$ of countable rank and its main numerical invariants.
In particular, they found a basis of $F({\mathfrak B})$ as a $K$-vector space,
computed the Hilbert series of the $d$-generated free bicommutative algebra $F_d({\mathfrak B})$,
the cocharacter and codimension sequences of the variety $\mathfrak B$.

It was shown in \cite{DT} that the square $F^2({\mathfrak B})$ of the algebra $F({\mathfrak B})$ is a commutative and associative algebra.
Therefore one should expect that the algebra $F({\mathfrak B})$ itself has many properties typical for commutative and associative algebras.
The simplest example of a finitely generated bicommutative algebra which is not noetherian is the one-generated free algebra
$F_1({\mathfrak B})$ which has not finitely generated one-sided ideals. But we have established that any finitely generated bicommutative algebra
is weakly noetherian, i.e., satisfies the ascending chain condition for two-sided ideals.

Let $K\{X\}=K\{x_1,x_2,\ldots\}$ be the absolutely free nonassociative algebra and let $f(x_1,\ldots,x_d)\in K\{X\}$.
Recall that the $K$-algebra $R$ satisfies the polynomial identity $f=0$ if
$f(r_1,\ldots,r_d)=0$ for all $r_1,\ldots,r_d\in R$. If $\{f_i\in K\{X\}\mid i\in I\}$ is a set of elements
in $K\{X\}$, then the class $\mathfrak V$ of all algebras satisfying the polynomial identities $f_i=0$, $i\in I$,
is called the {\it variety} defined by the system of polynomial identities $\{f_i\mid i\in I\}$.
The set $T(\mathfrak{V})$ of all polynomial identities satisfied by the variety $\mathfrak{V}$
is called the {\it $T$-ideal} or the {\it verbal ideal} of $\mathfrak{V}$.
By definition,  $T(\mathfrak{V})$ is generated as a T-ideal by any system of polynomials defining the variety $\mathfrak V$.
One of the main problems in the theory of varieties of algebras is:

\begin{problem}{\rm (The finite basis problem, or the Specht problem)}
Can any subvariety of a given variety of algebras
be defined by a finite number of polynomial identities?
\end{problem}

It follows from the description of the cocharacter sequence of the variety $\mathfrak B$ given in \cite{DIT} that in characteristic 0 every T-ideal in
$F({\mathfrak B})$ is generated by its elements in two variables only. Hence the weak noetherian property for finitely generated bicommutative
algebras immediately implies the positive solution to the Specht problem when $\text{char}K=0$. In order to establish a similar result
when the field $K$ is of positive characteristic we apply the classical method of Higman-Cohen \cite{H} and \cite{Co}. Nowadays in many cases
the Specht problem is solved into affirmative using the structure theory of T-ideals
developed by Kemer for associative algebras in characteristic 0 (see his book \cite{K} for an account), its further developments in positive characteristic
(see, e.g., Belov-Kanel, Rowen, and Vishne \cite{BKRV}), and for other classes of algebras
(e.g., Iltyakov \cite{I0} with detailed exposition in \cite{I} for finite dimensional Lie algebras, Vajs and Zel'manov \cite{VZ} for finitely generated Jordan algebras).
But up to the 1970s, the Higman-Cohen method was one of the few methods to handle into affirmative the Specht problem:

\begin{itemize}
\item for groups: Cohen \cite{Co} for ${\mathfrak A}^2$ (below we use the standard notation for the varieties),
Vaughan-Lee \cite{VL1} for ${\mathfrak A}{\mathfrak N}_c\cap{\mathfrak N}_k{\mathfrak A}$;
\item for Lie algebras: Vaughan-Lee \cite{VL2} for $[{\mathfrak A}^2,{\mathfrak E}]$, $\text{char}K\not=2$,
Bryant and Vaughan-Lee \cite{BVL} for ${\mathfrak N}_2{\mathfrak A}$, $\text{char}K\not=2$;
\item for associative algebras in characteristic 0: Latyshev \cite{L1} and Genov \cite{G1, G2} for ${\mathfrak N}_k{\mathfrak A}$,
Latyshev \cite{L2} and Popov \cite{P} for ${\mathfrak N}_k{\mathfrak L}_2$;
\item for associative algebras in positive characteristic: Chiripov and Siderov \cite{ChS} for ${\mathfrak N}_3\mathfrak A$, $\text{char}K\not=2$.
\end{itemize}

\section*{Acknowledgements}
\begin{itemize}
\item This project was carried out when the second named author visited the Institute of Mathematics and Informatics of the Bulgarian Academy of Sciences.
He is very grateful for the creative atmosphere and the warm hospitality during his visit.
\item The authors are thankful to the anonymous referee for the careful reading of the manuscript and the useful suggestions for improving of the exposition.
\item The first named author is deeply obliged to Andreas Weiermann (in the frames of the joint Bulgarian -- Belgium research project
``Mathematical Logic, Algebra, and Algebraic Geometry'' between the Bulgarian Academy of Sciences and the Research Foundation -- Flanders)
for the comments concerning the number of generators of the ideals in Theorems \ref{weak noetherianity},
\ref{Specht in characteristic 0}, and \ref{Specht in any characteristic}, and taken into account in
Remarks \ref{number of generators of ideals}, \ref{generation as T-ideal in char 0}, and \ref{generation as T-ideal in char p}.
\end{itemize}

\section{Preliminaries}

It the sequel we shall denote by $F$ and $F_d$ the free bicommutative algebras $F({\mathfrak B})$ and $F_d({\mathfrak B})$, respectively.
It was established in \cite{DIT} that the following monomials form a basis of the square $F^2_d$ of the algebra $F_d$
as a $K$-vector space:
\begin{equation}\label{basis of F^2}
u_{i,j}=x_{i_1}(\cdots(x_{i_{m-1}}((\cdots((x_{i_m} x_{j_1})x_{j_2})\cdots)x_{j_n}))\cdots),
\end{equation}
where $m,n\geq 1$, $1\leq i_1\leq\cdots\leq i_{m-1}\leq i_m\leq d$, $1\leq j_1\leq j_2\leq\cdots\leq j_n\leq d$.
If $L_{x_i}$ and $R_{x_j}$ are the operators of left and right multiplication on $F_d$, defined respectively by
\[
L_{x_i}:u\to x_iu\text{ and }R_{x_j}:u\to ux_j,\quad u\in F_d,
\]
then (\ref{basis of F^2}) can be written as
\[
u_{i,j}=L_{x_{i_1}}\cdots L_{x_{i_{m-1}}}R_{x_{j_n}}\cdots R_{x_{j_1}}(x_{i_m}).
\]
For any permutations $\sigma\in S_m$ and $\tau\in S_n$ the element $u_{i,j}$ from (\ref{basis of F^2}) satisfies the equality
\[
u_{i,j}=x_{i_{\sigma(1)}}(\cdots(x_{i_{\sigma(m-1)}}((\cdots((x_{i_{\sigma(m)}} x_{j_{\tau(1)}})x_{j_{\tau(2)}})\cdots)x_{j_{\tau(n)}}))\cdots),
\]
i.e.,
\[
u_{i,j}=L_{x_{i_{\sigma(1)}}}\cdots L_{x_{i_{\sigma(m-1)}}}R_{x_{j_{\tau(n)}}}\cdots R_{x_{j_{\tau(1)}}}(x_{i_{\sigma(m)}}).
\]
By \cite{DIT} the algebra $F_d$ is isomorphic to the following algebra $F(d)$.
Let $\mathbb{N}_0$ be the set of all non-negative integers and let $\mathbb{N}_0^d=\mathbb{N}_0\times\cdots\times \mathbb{N}_0$
be the direct sum of $d$ copies of $\mathbb{N}_0$. Let $\varepsilon_i=(0,\ldots,0,1,0,\ldots,0)\in \mathbb{N}_0^d$,
where all components except the $i$-th are equal to 0. The algebra $F(d)$ has a basis
\begin{equation}\label{basis of F(d)}
\{x_i\mid i=1,\ldots,d\}\cup \{v_{\alpha, \beta} \mid\alpha, \beta\in \mathbb{N}_0^d, \sum_{i=1}^d\alpha_i>0, \sum_{i=1}^d\beta_j> 0 \}
\end{equation}
and multiplication $\circ$ given by the following rules:
\begin{equation}\label{multiplication in F(d)}
\begin{array}{c}
x_i\circ x_j=v_{\varepsilon_i, \varepsilon_j},\\
x_i\circ v_{\alpha, \beta}=v_{\alpha+\varepsilon_i, \beta},\\
v_{\alpha, \beta}\circ x_j=v_{\alpha, \beta+\varepsilon_j},\\
v_{\alpha, \beta}\circ v_{\gamma, \delta}=v_{\alpha+\gamma, \beta+\delta}.\\
\end{array}
\end{equation}
The isomorphism $\pi:F_d\to F(d)$ is defined on the basis monomials of $F_d$ in the following way
and then extended by linearity.
We associate to any monomial $u_{i,j}$ in (\ref{basis of F^2}) the $m$-tuple
$X_d^{(i)}=(x_{i_1},\ldots ,x_{i_{m-1}},x_{i_m})$ and the $n$-tuple $X_d^{(j)}=(x_{j_1},x_{j_2},\ldots,x_{j_n})$. If
\[
X_d^{(i)}=(\underbrace{x_1,\ldots,x_1}_{\alpha_1\text{ times}},\ldots,\underbrace{x_d,\ldots,x_d}_{\alpha_d\text{ times}}),\quad
X_d^{(j)}=(\underbrace{x_1,\ldots,x_1}_{\beta_1\text{ times}},\ldots,\underbrace{x_d,\ldots,x_d}_{\beta_d\text{ times}}),
\]
then $\pi(u_{i,j})=v_{\alpha,\beta}$.

For our purposes it is more convenient to identify the element $v_{\alpha,\beta}$ with the monomial
\[
Y_d^{\alpha}Z_d^{\beta}=y_1^{\alpha_1}\cdots y_d^{\alpha_d}z_1^{\beta_1}\cdots z_d^{\beta_d}
\]
in the polynomial algebra $K[Y_d,Z_d]=K[y_1,\ldots,y_d,z_1,\ldots,z_d]$ in $2d$ commutative and associative variables. In this notation
the algebra $F(d)$ is isomorphic to the algebra $G_d$ with basis
\begin{equation}\label{basis of G_d}
\{x_i\mid i=1,\ldots,d\}\cup \{Y_d^{\alpha}Z_d^{\beta} \mid\deg Y_d^{\alpha}, \deg Z_d^{\beta}> 0 \}
\end{equation}
and multiplication
\begin{equation}\label{multiplication in G_d}
\begin{array}{c}
x_i\circ x_j=y_iz_j,\\
\\
x_i\circ Y_d^{\alpha}Z_d^{\beta}=y_iY_d^{\alpha}Z_d^{\beta},\\
\\
Y_d^{\alpha}Z_d^{\beta}\circ x_j=Y_d^{\alpha}Z_d^{\beta}z_j,\\
\\
Y_d^{\alpha}Z_d^{\beta}\circ Y_d^{\gamma}Z_d^{\delta}=Y_d^{\alpha+\gamma}Z_d^{\beta+\delta}.\\
\end{array}
\end{equation}

The following lemma summarizes the properties of $G_d$ stated above.

\begin{lemma}\label{properties of G_d} In the notation of (\ref{basis of G_d}) and (\ref{multiplication in G_d}):

{\rm (i)} The algebra $F_d$ is isomorphic to the algebra $G_d$ generated by $X_d=\{x_1,\ldots,x_d\}$. The square $G_d^2$ of $G_d$ has a basis
$\{Y_d^{\alpha}Z_d^{\beta} \mid\deg Y_d^{\alpha}, \deg Z_d^{\beta}> 0 \}$.

{\rm (ii)} The left and the right multiplications by the elements of $X_d$ on $G^2_d$ define on it a natural structure of a $K[Y_d,Z_d]$-module.
\end{lemma}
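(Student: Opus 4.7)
The plan is to treat the lemma as a bookkeeping consequence of the construction that has just been described. For part (i), I would begin by invoking the isomorphism $\pi:F_d\to F(d)$ from \cite{DIT} and then defining the $K$-linear map $\varphi:F(d)\to G_d$ by $x_i\mapsto x_i$ and $v_{\alpha,\beta}\mapsto Y_d^{\alpha}Z_d^{\beta}$. Matching (\ref{multiplication in F(d)}) against (\ref{multiplication in G_d}) line by line shows that $\varphi$ is a homomorphism; since it is a bijection between the bases (\ref{basis of F(d)}) and (\ref{basis of G_d}), it is an isomorphism, so $F_d\cong F(d)\cong G_d$. Excising $x_1,\ldots,x_d$ (which lie outside the square) from (\ref{basis of G_d}) gives the stated basis of $G_d^2$. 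Finally, each basis monomial $Y_d^{\alpha}Z_d^{\beta}$ can be obtained from the $x_i$'s by first forming $x_{i_m}\circ x_{j_1}=y_{i_m}z_{j_1}$ and then left-multiplying by $x_{i_{m-1}},\ldots,x_{i_1}$ and right-multiplying by $x_{j_2},\ldots,x_{j_n}$, using the second and third rules of (\ref{multiplication in G_d}); hence $X_d$ generates $G_d$.

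For part (ii), the crucial observation is that on $G_d^2$ the rules (\ref{multiplication in G_d}) read simply
\[
L_{x_i}(u)=y_i u,\qquad R_{x_j}(u)=u z_j,
\]
where the right-hand sides are ordinary products in the commutative polynomial algebra $K[Y_d,Z_d]$. Commutativity of $K[Y_d,Z_d]$ then gives at once
\[
L_{x_i}L_{x_j}=L_{x_j}L_{x_i},\quad R_{x_i}R_{x_j}=R_{x_j}R_{x_i},\quad L_{x_i}R_{x_j}=R_{x_j}L_{x_i}
\]
as operators on $G_d^2$. Consequently $y_i\mapsto L_{x_i}$, $z_j\mapsto R_{x_j}$ extends to a $K$-algebra homomorphism $K[Y_d,Z_d]\to\mathrm{End}_K(G_d^2)$, which is exactly the asserted $K[Y_d,Z_d]$-module structure.

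There is no serious obstacle here: the statement is a compact repackaging of the multiplication table (\ref{multiplication in G_d}) already written down. The only point that calls for an explicit check is the mixed commutation $L_{x_i}R_{x_j}=R_{x_j}L_{x_i}$ on $G_d^2$, which is not built into the bicommutative identities themselves but is immediate from the polynomial description, since both compositions send $Y_d^{\alpha}Z_d^{\beta}$ to $y_iY_d^{\alpha}Z_d^{\beta}z_j$.
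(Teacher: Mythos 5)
Your proposal is correct and follows essentially the same route as the paper, which states the lemma as a summary of the preceding construction: the isomorphism $F_d\cong F(d)$ from \cite{DIT}, the identification $v_{\alpha,\beta}\leftrightarrow Y_d^{\alpha}Z_d^{\beta}$, and the multiplication rules (\ref{multiplication in G_d}). Your explicit verification that $L_{x_i}$ and $R_{x_j}$ act as multiplication by $y_i$ and $z_j$ and hence commute, yielding the $K[Y_d,Z_d]$-module structure on $G_d^2$, is exactly the intended (and correct) justification.
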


As an immediate consequence of Lemma \ref{properties of G_d} we obtain the following description of the algebra $F$.

\begin{lemma}\label{properties of G} The algebra $F$ is isomorphic to the algebra $G$ with basis
\[
X\cup\{Y^{\alpha}Z^{\beta}\in K[Y,Z]=K[y_1,y_2,\ldots,z_1,z_2,\ldots]\mid\deg Y^{\alpha},\deg Z^{\beta}>0\}.
\]
The algebra $G$ is generated by $X$ and the left and the right multiplications by the elements from $X$ on $G^2$ make it
a $K[Y,Z]$-module.
\end{lemma}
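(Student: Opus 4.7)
The plan is to obtain Lemma \ref{properties of G} by passing from the finite-rank case of Lemma \ref{properties of G_d} to the countable case via a direct-limit argument. The key observation is that both sides of the isomorphism are naturally filtered by the number of generators involved, and the isomorphisms $\pi_d\colon F_d\to G_d$ constructed in the finite-rank case are compatible with the inclusions obtained by adjoining one more generator.

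First I would make the filtration explicit. On the source side, the inclusion $X_d\subset X_{d+1}$ induces an injective homomorphism $\iota_d\colon F_d\hookrightarrow F_{d+1}$ such that $F=\bigcup_{d\geq 1}\iota_d(F_d)$, since every element of the free bicommutative algebra $F$ uses only finitely many of the free generators. On the target side, the variables $Y_d,Z_d\subset Y_{d+1},Z_{d+1}$, and the basis (\ref{basis of G_d}) of $G_d$ is a subset of the basis of $G_{d+1}$. Let $G=\bigcup_{d\geq 1}G_d$ be endowed with the basis displayed in the statement (monomials $Y^\alpha Z^\beta$ of positive degree in both $Y$ and $Z$, together with the generators $x_i$) and the multiplication rules (\ref{multiplication in G_d}) which make sense verbatim for elements of any finite rank; these rules are consistent with the inclusions $G_d\hookrightarrow G_{d+1}$.

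Next I would verify that the isomorphisms $\pi_d$ of Lemma \ref{properties of G_d} agree on the overlaps, i.e., $\pi_{d+1}\circ\iota_d=\pi_d$. This is a direct check on basis monomials: a monomial $u_{i,j}\in F_d$ of the shape (\ref{basis of F^2}) is sent by both maps to the same $v_{\alpha,\beta}=Y^\alpha Z^\beta$, because its multidegree vectors $\alpha,\beta$ are unchanged when we view $u_{i,j}$ as sitting inside $F_{d+1}$. Taking the direct limit of the commutative system $\{\pi_d\}$ then yields an algebra isomorphism $\pi\colon F\to G$, proving the first assertion.

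Finally, the $K[Y,Z]$-module structure on $G^2$ is inherited from the finite-rank modules. For each $d$, Lemma \ref{properties of G_d}(ii) gives a $K[Y_d,Z_d]$-action on $G_d^2$ by left and right multiplications by elements of $X_d$, and these actions are compatible with the embeddings $G_d^2\hookrightarrow G_{d+1}^2$. Passing to the union, the left and right multiplications by all $x_i\in X$ extend to commuting $K$-linear operators on $G^2$, which by (\ref{multiplication in G_d}) act exactly as multiplication by $y_i$ and $z_i$ respectively in the polynomial ring $K[Y,Z]$. Since these operators commute pairwise, they extend to a well-defined $K[Y,Z]$-module structure on $G^2$. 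I do not anticipate any serious obstacle; the content of the lemma is essentially bookkeeping, and the only point that requires care is confirming that the finite-rank multiplication rules and the identification $v_{\alpha,\beta}\leftrightarrow Y^\alpha Z^\beta$ are stable under enlarging the generating set.
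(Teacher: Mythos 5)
Your proposal is correct and follows essentially the same route as the paper, which simply records this lemma as an immediate consequence of Lemma \ref{properties of G_d}: the passage from finite rank to countable rank is exactly the union/direct-limit bookkeeping you spell out, with the compatibility of the maps $\pi_d$ and of the module structures being the only (routine) points to check.
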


\section{Weak noetherianity}
We start with an example showing that finitely generated bicommutative algebras are not necessarily noetherian.
\begin{proposition}\label{not noetherian}
The free bicommutative algebra $F_1=F_1({\mathfrak B})$ is not noetherian.
\end{proposition}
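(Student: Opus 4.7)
The plan is to work in the concrete model $G_1$ provided by Lemma \ref{properties of G_d} with $d=1$, and to exhibit an infinite strictly ascending chain of left ideals. This is convenient because the multiplication rules (\ref{multiplication in G_d}) let one track the $y$-degree $\alpha$ of each basis monomial $y^\alpha z^\beta\in G_1^2$: left multiplication by $x$ raises it by $1$, and left multiplication by $y^\gamma z^\delta$ raises it by $\gamma\geq 1$.

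For each $k\geq 1$, let $J_k$ denote the left ideal of $G_1$ generated by $yz,yz^2,\ldots,yz^k$. To prove $yz^{k+1}\notin J_k$, I will exhibit a left ideal $M_k$ of $G_1$ with $J_k\subseteq M_k$ but $yz^{k+1}\notin M_k$. Set
\[
M_k=\mathrm{span}_K\{yz,yz^2,\ldots,yz^k\}+\mathrm{span}_K\{y^\alpha z^\beta:\alpha\geq 2,\ \beta\geq 1\}.
\]
A direct calculation with (\ref{multiplication in G_d}) shows $M_k$ is closed under left multiplication by every basis element of $G_1$: the products $x\circ yz^j=y^2z^j$ and $y^\gamma z^\delta\circ yz^j=y^{\gamma+1}z^{\delta+j}$ both have $y$-degree $\geq 2$, and any left multiplication applied to an element of the second summand of $M_k$ only increases the $y$-degree further. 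Hence $M_k$ is a left ideal containing the generators of $J_k$, so $J_k\subseteq M_k$.

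Since $yz^{k+1}$ has $y$-degree $1$ but is linearly independent of $yz,\ldots,yz^k$, it fails to lie in $M_k$, and therefore $yz^{k+1}\notin J_k$. Consequently $J_1\subsetneq J_2\subsetneq\cdots$ is an infinite strictly ascending chain of left ideals of $G_1\cong F_1$, and the symmetric chain generated by $y^jz$ ($j=1,\ldots,k$) does the same for right ideals. The argument has no serious obstacle; the only point needing care is the nonassociative, nonunital definition of the generated left ideal, which is why I bound $J_k$ above by the explicit left ideal $M_k$ rather than trying to list its elements directly.
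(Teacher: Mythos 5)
Your proposal is correct and takes essentially the same route as the paper: both work in the model $G_1$, use the monomials $yz^{\delta}$ as generators, and exploit the fact that every left multiplication raises the $y$-degree, so that $yz^{k+1}$ cannot be reached from $yz,\ldots,yz^{k}$ (with the symmetric argument for right ideals). The paper phrases this as the single left ideal generated by all $yz^{\delta}$ failing to be finitely generated, by listing the spanning monomials of the subideal generated by finitely many of them, while you repackage it as a strictly ascending chain $J_1\subsetneq J_2\subsetneq\cdots$ bounded above by the explicit left ideal $M_k$ --- a minor, harmless variation.
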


\begin{proof}
By Lemma \ref{properties of G_d} the algebras $F_1$ and $G_1$ are isomorphic and we shall work in $G_1$ instead of in $F_1$.
As a vector space $G_1$ has a basis
\[
\{x_1\}\cup\{y_1^{\alpha_1}z_1^{\beta_1}\mid\alpha_1,\beta_1>0\}.
\]
Consider the left ideal $I$ of $G_1$ generated by the monomials
\begin{equation}\label{generating set}
y_1z_1^{\delta},\quad \delta=1,2,\ldots.
\end{equation}
If $I$ is finitely generated, then it can be generated by a finite number of monomials $y_1z_1^{\delta}$, $1\leq\delta\leq n$, from (\ref{generating set}).
Then, by (\ref{multiplication in G_d}), $I$ is spanned by the monomials
\[
\underbrace{x_1\circ\cdots\circ x_1}_{k\text{ times}}\circ y_1z_1^{\delta}=y_1^{k+1}z_1^{\delta},\quad k=0,1,2,\ldots,\quad1\leq\delta\leq n,
\]
\[
y_1^{\alpha_1}z_1^{\beta_1}\circ y_1z_1^{\delta}=y_1^{\alpha_1+1}z_1^{\beta_1+\delta},\quad\alpha_1,\beta_1\geq 1,\quad 1\leq\delta\leq n.
\]
Obviously, this list of monomials does not contain the monomials $y_1z_1^{\delta}$ from $I$ for $\delta>n$, i.e.,
the left ideal $I$ is not finitely generated. The considerations for not finitely generated right ideals of $G_1$ are similar.
It is sufficient to consider the right ideal generated by $y_1^{\gamma}z_1$, $\gamma=1,2,\ldots$.
\end{proof}

The following theorem is the first main result of our paper.

\begin{theorem}\label{weak noetherianity}
Finitely generated bicommutative algebras satisfy the ascending chain condition for two-sided ideals.
\end{theorem}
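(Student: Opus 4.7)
The plan is to exploit the $K[Y_d,Z_d]$-module structure on $G_d^2$ furnished by Lemma~\ref{properties of G_d} and combine it with Hilbert's basis theorem. First I would reduce to the free case: since every finitely generated bicommutative algebra is a homomorphic image of $F_d\cong G_d$ for some $d$, and two-sided ideals of such a quotient correspond bijectively to two-sided ideals of $G_d$ containing a fixed one, it suffices to prove the ascending chain condition for two-sided ideals of $G_d$.

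Next I would identify the basis $\{Y_d^{\alpha}Z_d^{\beta}:\deg Y_d^{\alpha},\deg Z_d^{\beta}>0\}$ of $G_d^2$ with a set of monomials in the polynomial ring $K[Y_d,Z_d]$. The multiplication rules (\ref{multiplication in G_d}) translate left multiplication by $x_i$ in $G_d$ into multiplication by $y_i$ in $K[Y_d,Z_d]$, right multiplication by $x_j$ into multiplication by $z_j$, and the $\circ$-multiplication by $v_{\gamma,\delta}\in G_d^2$ into multiplication by the monomial $Y_d^{\gamma}Z_d^{\delta}$. Hence $G_d^2$ is a submodule of $K[Y_d,Z_d]$ generated by the $d^2$ elements $y_iz_j$, and by Hilbert's basis theorem it is a noetherian $K[Y_d,Z_d]$-module.

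The key observation is then that for any two-sided ideal $I$ of $G_d$, the intersection $I\cap G_d^2$ is closed under left and right multiplications by all $x_i$ and by all elements of $G_d^2$, and is therefore a $K[Y_d,Z_d]$-submodule of $G_d^2$. Given an ascending chain $I_1\subseteq I_2\subseteq\cdots$ of two-sided ideals of $G_d$, the chain $I_k\cap G_d^2$ of $K[Y_d,Z_d]$-submodules must stabilize by the noetherianity established above, while the chain of images $(I_k+G_d^2)/G_d^2$ lies in the $d$-dimensional quotient $G_d/G_d^2\cong\mathrm{span}(X_d)$ and hence stabilizes for purely linear-algebraic reasons.

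To conclude, I would run a standard sandwich argument: if both $I_k\cap G_d^2$ and $(I_k+G_d^2)/G_d^2$ are constant for $k\geq N$, then any $u\in I_{N+1}$, written as $u=v+w$ with $v\in\mathrm{span}(X_d)$ and $w\in G_d^2$, admits an element $u'\in I_N$ with the same linear part $v$, so that $u-u'\in G_d^2\cap I_{N+1}=G_d^2\cap I_N\subseteq I_N$ and therefore $u\in I_N$. The main point to verify carefully is that $I\cap G_d^2$ really carries the full $K[Y_d,Z_d]$-action, which rests on the bicommutativity identities ensuring that iterated left/right multiplications by the $x_i$'s preserve $G_d^2$ and behave like polynomial multiplication in the two disjoint families $y_i$ and $z_j$; once this is in place the argument is immediate.
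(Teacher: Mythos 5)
Your proposal is correct and follows essentially the same route as the paper: reduce to $G_d$, use the finite-dimensionality of $G_d/G_d^2$, and observe that two-sided ideals met with $G_d^2$ are $K[Y_d,Z_d]$-submodules of the module $G_d^2$ generated by the $d^2$ elements $y_iz_j$, so Hilbert's basis theorem applies. Your explicit sandwich argument for assembling the two stabilizing chains is just a spelled-out version of the paper's reduction, so there is nothing essentially different to compare.
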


\begin{proof}
It is sufficient to work in the free algebra $F_d$, or, equivalently, in its isomorphic copy $G_d$.
The factor algebra $G_d/G_d^2$ is finite dimensional and hence noetherian. Therefore the theorem will be established if
we prove the weak noetherianity for the ideals in $G_d^2$. Every two-sided ideal $I$ of $G_d$ which is in $G_d^2$ is stable under the
left and right multiplications by the generators $x_1,\ldots,x_d$ of $G_d$ and hence is a $K[Y_d,Z_d]$-submodule of $F_d^2$.
As a $K[Y_d,Z_d]$-module $F_d^2$ is generated by
the finite number of monomials $y_iz_j$, $i,j=1,\ldots,d$. Hence the $K[Y_d,Z_d]$-submodule $I$ of $G_d^2$ is also finitely generated
which implies that $I$ is finitely generated also as a two-sided ideal.
\end{proof}

\begin{remark}\label{number of generators of ideals}
By Theorem \ref{weak noetherianity} every two-sided ideal $I$ of the free becommutative algebra $F_d$ is finitely generated.
It is an interesting problem how the number of the generators of $I$ depends on the rank $d$ of $F_d$. Since the square $F^2_d$ of $F_d$
is commutative and associative, we shall comment what happens for the ideals of the polynomial algebra $K[X_d]$. Seidenberg \cite{S}
formalized the problem in the following way. Given a function $f: {\mathbb N}_0\to{\mathbb N}_0$, what is the maximal $k_0\in\mathbb N$ with the property:
There exists an ideal $I$ of $K[X_d]$ generated by
the set $\{p_0(X_d),p_1(X_d),\ldots,p_{k_0}(X_d)\}$ such that $\deg(p_k(X_d))\leq f(k)$, $k=0,1,\ldots,k_0$, and the chain of ideals
$I_0\subset I_1\subset\cdots\subset I_{k_0}$, where $I_k$ is generated by $\{p_0(X_d),p_1(X_d),\ldots,p_k(X_d)\}$, is strictly increasing.
He showed that there exists a bound $g_f$ depending on $f$ and $d$ only which is recursive in $f$ for a fixed $d$.
Moreno-Soc\'{\i}as \cite{MS2} found a simpler bound $g'_f$ which is primitive recursive in $f$ for all $d$ but there is no bound
which is primitive recursive in $d$ in general. In particular \cite{MS1}, he constructed an example of an ideal $I$ for the function $f(k)=m+k$, $k=0,1,\ldots$,
and $d\geq 1$, such that the exact bound of the number $k_0$ of generators is
$k_0=A(d,m-1)-1$, where $A$ is the Ackermann function \cite{A} which is recursive and known that grows faster than any primitive recursive function.
\end{remark}

\section{The Specht property}
One of the most important numerical invariants of a given variety $\mathfrak V$ of algebras over a filed of characteristic 0 is its cocharacter sequence
\[
\chi_n({\mathfrak V})=\sum_{\lambda\vdash n}m(\lambda)\chi_{\lambda},\quad n=1,2,\ldots.
\]
Here $\chi_n({\mathfrak V})$ is the $S_n$-character of the vector space $P_n({\mathfrak V})$ of the multilinear elements
of degree $n$ in the free algebra $F_n({\mathfrak V})$ of the variety $\mathfrak V$ under the natural left action of the symmetric group $S_n$.
We have denoted by $\chi_{\lambda}$ the irreducible $S_n$-character indexed
with the partition $\lambda$ of $n$ and $m(\lambda)\in{\mathbb N}_0$ is the multiplicity
of $\chi_{\lambda}$ in $\chi_n({\mathfrak V})$. For the variety $\mathfrak B$ of bicommutative algebras it was shown in \cite{DIT} that
\[
\chi_n({\mathfrak B})=\sum_{(\lambda_1,\lambda_2)}m(\lambda_1,\lambda_2)\chi_{(\lambda_1,\lambda_2)},
\]
where $\lambda=(\lambda_1,\lambda_2)$ is a partition in two parts,
with explicitly given values of $m(\lambda_1,\lambda_2)$.
This description, together with Theorem \ref{weak noetherianity} easily implies the positive solution to the Specht problem in characteristic 0.

\begin{theorem}\label{Specht in characteristic 0}
Let $\mathfrak V$ be any variety of bicommutative algebras over a field $K$ of characteristic $0$.
Then $\mathfrak V$ can be defined by a finite system of polynomial identities.
\end{theorem}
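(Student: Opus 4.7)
The plan is to combine the two tools the paper has already set up: the two-row shape of the cocharacter $\chi_n(\mathfrak B)$ proved in \cite{DIT}, which in characteristic $0$ reduces every T-ideal to its two-variable part, and the weak noetherianity of Theorem \ref{weak noetherianity} applied to $F_2=F_2(\mathfrak B)$, which delivers finiteness for that two-variable part.

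The first step is to show that for every T-ideal $T$ of $F=F(\mathfrak B)$, the T-ideal of $F$ generated by $T\cap F_2$ is all of $T$. In characteristic $0$ a T-ideal is determined by its multilinear components $T\cap P_n(\mathfrak B)$. Since by \cite{DIT} the cocharacter $\chi_n(\mathfrak B)$ involves only partitions $(\lambda_1,\lambda_2)$ with at most two parts, so does the character of every $S_n$-submodule of $P_n(\mathfrak B)$, and in particular of $T\cap P_n(\mathfrak B)$. The standard polarization/restitution trick then shows that any element of a two-row isotypic component is, up to a nonzero scalar, the complete linearization of the homogeneous polynomial in the two variables $y,z$ obtained via the substitutions $x_1=\cdots=x_{\lambda_1}=y$, $x_{\lambda_1+1}=\cdots=x_n=z$. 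Since substitutions and the $S_n$-action on variables are among the operations preserving a T-ideal, the claim follows.

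The second step applies Theorem \ref{weak noetherianity} to $T=T(\mathfrak V)$. The set $J=T(\mathfrak V)\cap F_2$ is a two-sided ideal of the finitely generated bicommutative algebra $F_2$, and hence is finitely generated as such, say by $f_1,\ldots,f_k$. The T-ideal of $F$ generated by $\{f_1,\ldots,f_k\}$ already contains the two-sided ideal of $F_2$ that these elements generate, namely $J$; by the first step, it therefore coincides with $T(\mathfrak V)$. Pulling back to $K\{X\}$, the variety $\mathfrak V$ is defined by the two identities of bicommutativity together with preimages of $f_1,\ldots,f_k$, which is a finite list.

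The principal obstacle is the polarization/restitution reduction in the first step: it uses $\text{char}K=0$ essentially and it uses the two-row support of $\chi_n(\mathfrak B)$ essentially. This is precisely why the positive-characteristic case mentioned in the introduction cannot be handled in the same fashion and must be treated by the Higman-Cohen method. Everything else is a direct application of Theorem \ref{weak noetherianity} inside $F_2$; a quantitative estimate on $k$ could in principle be extracted from the Seidenberg-type bounds discussed in Remark \ref{number of generators of ideals}.
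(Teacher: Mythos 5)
Your proposal is correct and follows essentially the same route as the paper: reduce to two variables using the two-row cocharacter of $\mathfrak B$ from \cite{DIT} (the polarization/restitution argument you sketch is the standard fact the paper cites from \cite[Chapter 12]{D1}), then apply Theorem \ref{weak noetherianity} to conclude that $T({\mathfrak V})\cap F_2({\mathfrak B})$ is finitely generated even as an ordinary two-sided ideal, and add the two defining identities of $\mathfrak B$.
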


\begin{proof}
Let the base field $K$ be of characteristic 0 and let $\mathfrak V$ be a variety of $K$-algebras.
It is well known (see, e.g., \cite[Chapter 12]{D1})
that if the nonzero multiplicities $m(\lambda)$ in the cocharacter sequence $\chi_n({\mathfrak V})$ are only for partitions
$\lambda=(\lambda_1,\ldots,\lambda_d)$ in not more than $d$ parts, then every subvariety $\mathfrak W$ of $\mathfrak V$ can be defined by polynomial identities
$f(x_1,\ldots,x_d)=0$, where $f(x_1,\ldots,x_d)\in F_d({\mathfrak V})$. In our case, if $\mathfrak V$ is a subvariety of $\mathfrak B$, it can be defined by its
identities from $F_2({\mathfrak B})$. Hence the T-ideal $T({\mathfrak V})\subseteq F({\mathfrak B})$ of the polynomial identities
of $\mathfrak V$ is generated as a T-ideal by its elements in $T({\mathfrak V})\cap F_2({\mathfrak B})$.
The variety of all bicommutative algebras is defined by two identities. Hence to show that the variety $\mathfrak V$
has a finite basis of polynomial identities in the absolutely free algebra $K\{X\}$ it is sufficient to show that $T({\mathfrak V})\cap F_2({\mathfrak B})$
is finitely generated as a T-ideal in $F_2({\mathfrak B})$.
Now Theorem \ref{weak noetherianity} gives the much stronger result that $T({\mathfrak V})\cap F_2({\mathfrak B})$
is finitely generated as an ordinary two-sided ideal.
\end{proof}

\begin{remark}\label{generation as T-ideal in char 0}
As in Remark \ref{number of generators of ideals} we can ask how many polynomial identities we need to define a subvariety $\mathfrak V$ of
$\mathfrak B$ in the case of characteristic 0. In the recent paper \cite{D2} one of the authors has shown that
if $\mathfrak V$ satisfies a polynomial identity of degree $k$, then the number of the irreducible $S_n$-components in
the $S_n$-module $P_n({\mathfrak V})$ of the multilinear elements in $F({\mathfrak V})$ is bounded by $3k^2$. One can derive from here
that a similar bound holds for the number of the generators of the T-ideal $T({\mathfrak V})$ in $F({\mathfrak B})$ if
$\mathfrak V$ satisfies an identity of degree $k$ and this bound does not depend on the degree of the other identities satistied by $\mathfrak V$.
\end{remark}

For the solution to the Specht problem over a field $K$ of arbitrary characteristic we shall apply the Higman-Cohen method
based on the technique of partially ordered sets.

\begin{definition}
The partially ordered set $(T, \preceq)$ is called {\it partially well-ordered} if for every subset $S$ of $T$ there is a finite subset $S_0$ of $S$
with the property that for each $s\in S$ there is an element $s_0\in S_0$ such that $s_0\preceq s$.
\end{definition}

Let $\Phi$ be the set of all order preserving maps $\varphi:{\mathbb N}\to{\mathbb N}$, i.e.,
if $i<j$, $i,j\in\mathbb N$, then $\varphi(i)<\varphi(j)$. Let $(T,\preceq)$ be a partially ordered set
and let $\overline{T}$ be the set of all finite sequences of elements in $T$.
Define the following partial order on $\overline{T}$. If $(a_1,\ldots,a_m)$ and $(b_1,\ldots,b_n)$ are two sequences, then
\[
(a_1,\ldots,a_m)\preceq(b_1,\ldots,b_n)
\]
if and only if there exists $\varphi\in\Phi$ such that
\[
a_i\preceq b_{\varphi(i)}\text{ for all }i=1,\ldots,m.
\]
One of the key ingredients of the Higman-Cohen method is the following result.

\begin{proposition}\label{partially well ordered}{\rm \cite[Theorem 4.3]{H}}
If $(T, \preceq)$ is a partially well-ordered set, then the set $(\overline{T}, \preceq)$ is also partially well-ordered.
\end{proposition}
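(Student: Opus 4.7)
My plan is to prove the proposition by Nash-Williams' minimal bad sequence argument, which is the cleanest form of Higman's original proof. First I would record the well-known equivalent formulation of ``partially well-ordered'': $(T,\preceq)$ is pwo if and only if it contains no infinite strictly descending chain and no infinite antichain, equivalently, every infinite sequence in $T$ admits an infinite weakly increasing subsequence $a_{i_1}\preceq a_{i_2}\preceq\cdots$ with $i_1<i_2<\cdots$. I would state this equivalence as a short preliminary lemma (it follows by a standard application of the infinite Ramsey theorem to the pairs $\{i,j\}$ colored by whether $a_i\preceq a_j$, using that an infinite antichain is forbidden). Call a sequence $(s_n)$ in $\overline{T}$ \emph{bad} if there is no pair $i<j$ with $s_i\preceq s_j$; the goal becomes: no bad infinite sequence exists in $\overline{T}$.

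Suppose for contradiction that $\overline{T}$ admits a bad sequence. I would then construct a \emph{minimal} bad sequence $(s_n)$ by recursion on $n$: having chosen $s_1,\ldots,s_{n-1}$, take $s_n$ to be a sequence of minimal possible length in $T$ such that $s_1,\ldots,s_{n-1},s_n$ extends to some bad infinite sequence. Because the empty sequence $\emptyset$ satisfies $\emptyset\preceq t$ for every $t\in\overline{T}$, no $s_n$ in a bad sequence can be empty, so we may write $s_n=(a_n,t_n)$ where $a_n\in T$ is the first entry and $t_n\in\overline{T}$ is the remaining (strictly shorter) tail.

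Apply the equivalent form of pwo to $(a_n)_{n\in\mathbb{N}}\subseteq T$ to extract indices $i_1<i_2<\cdots$ with $a_{i_1}\preceq a_{i_2}\preceq\cdots$. Now form the new sequence
\[
s_1,s_2,\ldots,s_{i_1-1},\,t_{i_1},t_{i_2},t_{i_3},\ldots
\]
in $\overline{T}$. Since $t_{i_1}$ has length strictly smaller than $s_{i_1}$, the minimality of $s_{i_1}$ forces this sequence to be \emph{good}, i.e., to contain a pair of indices $p<q$ with the $p$-th entry $\preceq$ the $q$-th entry. I would then rule out each case to derive a contradiction: (a) both entries lie in the initial segment $s_1,\ldots,s_{i_1-1}$ — impossible, because the original sequence is bad; (b) the $p$-th entry is some $s_p$ with $p<i_1$ and the $q$-th entry is some $t_{i_k}$ — since $t_{i_k}\preceq s_{i_k}$ in $\overline{T}$ (embed via $\varphi(r)=r+1$), this yields $s_p\preceq s_{i_k}$ with $p<i_k$, contradicting badness; (c) both entries are tails $t_{i_j}\preceq t_{i_k}$ with $j<k$ — then combining with $a_{i_j}\preceq a_{i_k}$ via the obvious extension of $\varphi$ gives $s_{i_j}=(a_{i_j},t_{i_j})\preceq(a_{i_k},t_{i_k})=s_{i_k}$, again contradicting badness.

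The conceptual heart — and the only real obstacle — is the minimal bad sequence construction itself, which relies on dependent choice; the verification of badness splits naturally into the three cases above and is routine once the decomposition $s_n=(a_n,t_n)$ and the extraction of a weakly increasing subsequence of heads are in place. The rest is just unpacking the definition of the Higman order $\preceq$ on $\overline{T}$.
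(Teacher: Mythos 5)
Your proof is correct, but note that the paper does not prove this proposition at all: it is imported verbatim from Higman \cite[Theorem 4.3]{H} and used as a black box, so there is no ``paper proof'' to match. What you give is the standard Nash--Williams minimal-bad-sequence argument, and it is complete in outline: the Ramsey-type reformulation of partial well-order (every infinite sequence has an infinite weakly increasing subsequence), the recursive choice of $s_n$ of minimal length extendable to a bad sequence, the remark that no term of a bad sequence can be the empty word, and the three-case analysis of a good pair in $s_1,\ldots,s_{i_1-1},t_{i_1},t_{i_2},\ldots$ are all stated accurately; the only points left implicit are routine, namely that the recursively constructed sequence is itself bad (any good pair would already contradict the badness of the extension chosen at the later stage) and the transitivity of the Higman order on $\overline{T}$ invoked in your case (b) via $t_{i_k}\preceq s_{i_k}$. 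Compared with Higman's original proof, which derives the result from a general theorem about abstract algebras equipped with finitely many finitary operations compatible with the order (finite sequences being the algebra generated under concatenation-type operations), your route is more elementary and self-contained, at the price of invoking dependent choice and Ramsey's theorem; for the purposes of this paper, where only the statement is needed to feed into Proposition \ref{partially well ordered monomials} and Theorem \ref{Specht in any characteristic}, either derivation serves equally well.
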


For our purposes we shall need a restatement of a partial case of \cite[Lemma 1]{BVL}.
We shall include the proof for completeness of the exposition.

\begin{proposition}\label{partially well ordered monomials}{\rm \cite[Lemma 1]{BVL}}
Let $([Y,Z],\preceq)$ be the set of all commutative and associative monomials
in the variables $Y=\{y_1,y_2,\ldots\}$ and $Z=\{z_1,z_2,\ldots\}$ with the following partial order:
\[
Y^{\alpha}Z^{\beta}=y_1^{\alpha_1}\cdots y_m^{\alpha_m}z_1^{\beta_1}\cdots z_n^{\beta_n}\preceq
y_1^{\gamma_1}\cdots y_p^{\gamma_p}z_1^{\delta_1}\cdots z_q^{\delta_q}=Y^{\gamma}Z^{\delta}
\]
if and only if there exists $\varphi\in\Phi$ such that the monomial
\[
\varphi(Y^{\alpha}Z^{\beta})=y_{\varphi(1)}^{\alpha_1}\cdots y_{\varphi(m)}^{\alpha_m}z_{\varphi(1)}^{\beta_1}\cdots z_{\varphi(n)}^{\beta_n}
\]
divides the monomial $Y^{\gamma}Z^{\delta}$. Then the set $([Y,Z],\preceq)$ is partially well-ordered.
\end{proposition}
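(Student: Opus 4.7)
My plan is to deduce this from Higman's theorem (Proposition \ref{partially well ordered}) by encoding each commutative monomial as a finite sequence of exponent pairs and showing that this encoding is an order-embedding.

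Let $T = \mathbb{N}_0 \times \mathbb{N}_0$ with the componentwise order $(\alpha,\beta) \preceq_T (\gamma,\delta)$ if and only if $\alpha \leq \gamma$ and $\beta \leq \delta$. The first step is to verify that $(T,\preceq_T)$ is partially well-ordered. This is essentially Dickson's lemma: every subset of $\mathbb{N}_0^2$ has only finitely many minimal elements and each element dominates at least one of them, which is exactly the definition of partial well-ordering used in the paper. Alternatively, $(\mathbb{N}_0,\leq)$ is well-ordered and hence partially well-ordered, and it is routine that the product of two partially well-ordered sets is partially well-ordered.

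Next I would associate to each monomial $Y^\alpha Z^\beta = y_1^{\alpha_1}\cdots y_m^{\alpha_m} z_1^{\beta_1}\cdots z_n^{\beta_n}$ the sequence
\[
s(Y^\alpha Z^\beta) = \bigl((\alpha_1,\beta_1),(\alpha_2,\beta_2),\ldots,(\alpha_\ell,\beta_\ell)\bigr) \in \overline{T},
\]
where $\ell = \max\{k : \alpha_k > 0 \text{ or } \beta_k > 0\}$ and any unstated exponent is taken to be $0$. The key step is to show that $s$ is an order-embedding of $([Y,Z],\preceq)$ into $(\overline{T},\preceq)$. Because every $\varphi\in\Phi$ is strictly increasing (hence injective), divisibility of the commutative monomial $y_{\varphi(1)}^{\alpha_1}\cdots y_{\varphi(m)}^{\alpha_m} z_{\varphi(1)}^{\beta_1}\cdots z_{\varphi(n)}^{\beta_n}$ by $Y^\gamma Z^\delta$ is equivalent to the componentwise inequalities $\alpha_i \leq \gamma_{\varphi(i)}$ for $i \leq m$ and $\beta_j \leq \delta_{\varphi(j)}$ for $j \leq n$. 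Padding the source sequence with zero exponents up to index $\ell$ turns this into $(\alpha_i,\beta_i) \preceq_T (\gamma_{\varphi(i)},\delta_{\varphi(i)})$ for all $i=1,\ldots,\ell$, which is exactly the Higman condition on $s(Y^\alpha Z^\beta) \preceq s(Y^\gamma Z^\delta)$ with the same $\varphi$ used as the witness.

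Once this equivalence of orders is in hand, the conclusion is immediate: by Proposition \ref{partially well ordered} applied to $(T,\preceq_T)$, the set $(\overline{T},\preceq)$ is partially well-ordered, and pulling back through $s$ a finite dominating subset of $s(S)$ gives a finite dominating subset of any $S \subseteq [Y,Z]$. The only real point requiring care is the bookkeeping around positions $i$ in the padded sequence where $(\alpha_i,\beta_i)=(0,0)$, since at such positions the Higman witness $\psi\in\Phi$ is free to take any order-preserving value; but these positions impose only the trivial condition $(0,0)\preceq_T(\gamma_{\psi(i)},\delta_{\psi(i)})$, so taking $\psi=\varphi$ in each direction resolves the matter. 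I expect the verification of this order-matching to be the only nontrivial piece; the rest is a direct invocation of Dickson and Higman.
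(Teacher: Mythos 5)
Your proposal is correct and is essentially the paper's own argument: the paper likewise encodes each monomial as a finite sequence over the partially well-ordered set of exponent pairs (phrased there as the monomials $y^az^b$ ordered by divisibility, i.e., Dickson's lemma), matches the two orders via the same witness $\varphi\in\Phi$, and invokes Higman's theorem (Proposition \ref{partially well ordered}). Your treatment of the encoding as an order-embedding with the padding bookkeeping is only a slight refinement of the paper's claim that the two posets are isomorphic, not a different route.
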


\begin{proof}
If $m<n$ in the monomial $Y^{\alpha}Z^{\beta}=y_1^{\alpha_1}\cdots y_m^{\alpha_m}z_1^{\beta_1}\cdots z_n^{\beta_n}$
we may multiply it by $y_{m+1}^0\cdots y_n^0$ and similarly if $n<m$.
Hence we may assume that $m=n$ in $Y^{\alpha}Z^{\beta}$ and identify it with the sequence
\[
\overline{Y^{\alpha}Z^{\beta}}=(y^{\alpha_1}z^{\beta_1},\ldots,y^{\alpha_m}z^{\beta_m}).
\]
We partially order the set of monomials $[y,z]=\{y^az^b\mid a,b\geq 0\}$ by divisibility:
\[
y^az^b\preceq y^cz^d\text{ if and only if }y^az^b\text{ divides }y^cz^d.
\]
The sets $(\overline{[y,z]},\preceq)$ and $([Y,Z],\preceq)$ are isomorphic as partially ordered sets:
If $\varphi\in\Phi$ is an order preserving map then $y^{\alpha_i}z^{\beta_i}$ divides $y^{\gamma_{\varphi(i)}}z^{\delta_{\varphi(i)}}$
for all $i=1,\ldots,m$ if and only if the monomial
$\varphi(Y^{\alpha}Z^{\beta})=y_{\varphi(1)}^{\alpha_1}\cdots y_{\varphi(m)}^{\alpha_m}z_{\varphi(1)}^{\beta_1}\cdots z_{\varphi(m)}^{\beta_m}$
divides the monomial $Y^{\gamma}Z^{\delta}$.
Clearly, the set $([y,z],\preceq)$ is partially well-ordered. Applying Proposition \ref{partially well ordered monomials} we derive that the set
$(\overline{[y,z]},\preceq)$ is also partially well-ordered. Hence the same holds for the set $([Y,Z],\preceq)$.
\end{proof}

In the sequel we shall consider the set $[Y,Z]$ equipped with the above partial order $\preceq$.
Now we shall equip $[Y,Z]$ with one more linear order $\leq$ which is a version of the reflected lexicographic order:
\[
Y^{\alpha}Z^{\beta}=y_1^{\alpha_1}\cdots y_m^{\alpha_m}z_1^{\beta_1}\cdots z_m^{\beta_m}<
y_1^{\gamma_1}\cdots y_m^{\gamma_m}z_1^{\delta_1}\cdots z_m^{\delta_m}=Y^{\gamma}Z^{\delta}
\]
if and only if
\begin{itemize}
\item $\alpha_i<\gamma_i$ for some $i$ and $\alpha_j=\gamma_j$ for $j=i+1,\ldots,m$,
\item or $Y^{\alpha}=Y^{\gamma}$, $\beta_i<\delta_i$ for some $i$ and $\beta_j=\delta_j$ for $j=i+1,\ldots,m$.
\end{itemize}
Obviously the set $([Y,Z],\leq)$ is well-ordered. If
\[
f(Y,Z)=\sum_{i=1}^k\vartheta_{\alpha^{(i)},\beta^{(i)}}Y^{\alpha^{(i)}}Z^{\beta^{(i)}},\quad
0\not=\vartheta_{\alpha^{(i)},\beta^{(i)}}\in K,
\]
and $Y^{\alpha^{(1)}}Z^{\beta^{(1)}}>\cdots >Y^{\alpha^{(k)}}Z^{\beta^{(k)}}$, then we call the monomial
\[
\text{wt}(f(Y,Z))=Y^{\alpha^{(1)}}Z^{\beta^{(1)}}
\]
the {\it weight} of $f(Y,Z)$. It is easy to see, that if $f(Y,Z)$ belongs to the square $G^2$ of the algebra $G$, then
\begin{equation}\label{multiplication of weight}
\text{wt}(x_i\circ f(Y,Z))=y_i\text{wt}(f(Y,Z)),\quad \text{wt}(f(Y,Z)\circ x_i)=\text{wt}(f(Y,Z))z_i,
\end{equation}
\begin{equation}\label{action on weight by varphi}
\text{wt}(\varphi(f(Y,Z)))=\text{wt}(f(y_{\varphi(1)},\ldots,y_{\varphi(m)},z_{\varphi(1)},\ldots,z_{\varphi(n)}))
=\varphi(\text{wt}(f(Y,Z)))
\end{equation}
for all $\varphi\in\Phi$.

The following lemma is the key step in the proof of the Specht property for varieties of bicommutative algebras over an arbitrary field $K$.

\begin{lemma}\label{key lemma}
Let $f$ and $g$ be two polynomials in the square $F^2$ of the free bicommutative algebra $F({\mathfrak B})$
and let $f_1(Y,Z)$, $g_1(Y,Z)$ be their images in the algebra $G$. If
$\text{\rm wt}(f_1)\preceq \text{\rm wt}(g_1)$, then there is an element $h$ in the $T$-ideal of $F$ generated by $f$
with image $h_1$ in $G$ such that $\text{\rm wt}(h_1)=\text{\rm wt}(g_1)$.
\end{lemma}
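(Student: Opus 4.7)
My plan is to build $h$ in two stages corresponding to the two operations on weights available to us. First I use the $\varphi$-substitution from the hypothesis to match the ``shape'' of $\text{wt}(g_1)$, exploiting (\ref{action on weight by varphi}); then I attach the remaining $y_i$'s and $z_j$'s one at a time by multiplying on the left or right with generators of $F$, exploiting (\ref{multiplication of weight}). Both operations manifestly keep us inside the T-ideal generated by $f$.

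Concretely, since $\text{wt}(f_1)\preceq\text{wt}(g_1)$, there is $\varphi\in\Phi$ such that $\varphi(\text{wt}(f_1))$ divides $\text{wt}(g_1)$ in $K[Y,Z]$. Let $f':=\varphi(f)$ be the result of applying the endomorphism $x_i\mapsto x_{\varphi(i)}$ of $F$ to $f$; because T-ideals are closed under endomorphisms, $f'$ lies in the T-ideal generated by $f$, and by (\ref{action on weight by varphi}) its image $f'_1$ in $G$ has weight $\varphi(\text{wt}(f_1))$. Now factor
\[
\text{wt}(g_1)=\varphi(\text{wt}(f_1))\cdot y_{i_1}\cdots y_{i_r}z_{j_1}\cdots z_{j_s}
\]
in $K[Y,Z]$ and define
\[
h:=L_{x_{i_1}}\cdots L_{x_{i_r}}R_{x_{j_s}}\cdots R_{x_{j_1}}(f').
\]
Then $h$ lies in the two-sided ideal generated by $f'$ and hence in the T-ideal generated by $f$. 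Iterating (\ref{multiplication of weight}) yields $\text{wt}(h_1)=y_{i_1}\cdots y_{i_r}\varphi(\text{wt}(f_1))z_{j_1}\cdots z_{j_s}=\text{wt}(g_1)$, as required.

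The subtlety that deserves attention is the content of (\ref{action on weight by varphi}) itself: one has to check that applying $\varphi$ to $f_1$ does not produce cancellations which would replace the leading monomial by a strictly smaller one. This rests on $\varphi$ being strictly increasing, so that distinct monomials in the support of $f_1$ remain distinct after substitution and the linear order $\leq$ on $[Y,Z]$ is preserved; consequently the original leading monomial of $f_1$ remains the leading monomial of $\varphi(f_1)$. Granted this, the remainder of the argument is a direct translation of the divisibility witnessed by $\varphi$ into a finite sequence of algebra operations inside the T-ideal, with no real computational obstacle.
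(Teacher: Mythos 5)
Your proof is correct and follows essentially the same route as the paper: apply the order-preserving substitution $\varphi$ (an endomorphism of $F$, so staying in the T-ideal), then realize the remaining divisor factor of $\text{wt}(g_1)$ by successive left and right multiplications with generators, using (\ref{action on weight by varphi}) and (\ref{multiplication of weight}). Your extra remark justifying why $\varphi$ preserves the leading monomial (strict monotonicity prevents collisions of monomials and preserves the order $\leq$) is a valid verification of a step the paper simply asserts.
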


\begin{proof}
Let
\[
\text{\rm wt}(f_1)=Y^{\alpha}Z^{\beta}, \quad\text{\rm wt}(g_1)=Y^{\gamma}Z^{\delta},
\]
and let $\varphi\in\Phi$ be such that
\[
\varphi(\text{\rm wt}(f_1))=\varphi(Y^{\alpha}Z^{\beta})
=y_{\varphi(1)}^{\alpha_1}\cdots y_{\varphi(m)}^{\alpha_m}z_{\varphi(1)}^{\beta_1}\cdots z_{\varphi(n)}^{\beta_n}
\]
divides the monomial $Y^{\gamma}Z^{\delta}$. Hence there exists a monomial $Y^{\xi}Z^{\eta}\in[Y,Z]$ such that
\[
Y^{\xi}Z^{\eta}\varphi(\text{\rm wt}(f_1))=\text{\rm wt}(g_1).
\]
Since $\varphi$ defines an endomorphism of the algebra $F$ and T-ideals are closed under endomorphisms, we obtain that the polynomial
$\varphi(f)$ belongs to the T-ideal $(f)^T\subset F$ generated by $f$. Its image in $G^2$ is $\varphi(f_1)$. By (\ref{action on weight by varphi})
$\text{wt}(\varphi(f_1))=\varphi(\text{wt}(f_1))$. The weight of $\text{wt}(g_1)$ can be obtained from $\varphi(\text{wt}(f_1))$
by consequent right- and left-multiplications by elements of $X$. The same multiplications produce a polynomial
$h\in (f)^T$. By (\ref{multiplication of weight}) $Y^{\xi}Z^{\eta}\varphi(\text{\rm wt}(f_1))=\text{\rm wt}(h_1)=\text{\rm wt}(g_1)$.
\end{proof}

The following theorem is the second main result of the paper.

\begin{theorem}\label{Specht in any characteristic}
Any variety $\mathfrak V$ of bicommutative algebras over a field $K$ of arbitrary characteristic has a finite basis of its polynomial identities.
\end{theorem}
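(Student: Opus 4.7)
The plan is a Higman--Cohen style infinite descent argument, using the partial well-order $\preceq$ supplied by Proposition \ref{partially well ordered monomials} together with the linear well-order $\leq$ on $[Y,Z]$. Since $\mathfrak B$ is defined by the two identities of left- and right-commutativity, it suffices to show that every T-ideal $J$ of the free bicommutative algebra $F=F(\mathfrak B)$ is finitely generated as a T-ideal of $F$; together with the two bicommutativity identities this yields a finite basis of identities for $\mathfrak V$ in the absolutely free algebra $K\{X\}$. Transferring to the isomorphic copy $G$, every nonzero element of $J$ lies in $G^2$ (there are no nontrivial identities of degree one), so each nonzero $f\in J$ has a well-defined weight $\mathrm{wt}(f)\in[Y,Z]$.

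Suppose, for contradiction, that some such $J$ is not finitely generated as a T-ideal. Using the well-order $\leq$ on $[Y,Z]$, I would choose $f_1\in J\setminus\{0\}$ of minimal weight, and inductively pick $f_{n+1}$ of minimal weight in $J\setminus (f_1,\ldots,f_n)^T$; by hypothesis this set is never empty, producing an infinite sequence $(f_n)$ with weights $w_n=\mathrm{wt}(f_n)$. By Proposition \ref{partially well ordered monomials} the set $([Y,Z],\preceq)$ is partially well-ordered, so there exist indices $i<j$ with $w_i\preceq w_j$. Lemma \ref{key lemma} then produces an element $h\in(f_i)^T\subseteq(f_1,\ldots,f_{j-1})^T$ with $\mathrm{wt}(h)=w_j$. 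After rescaling $h$ so that its leading coefficient agrees with that of $f_j$, the polynomial $f_j-h$ either is zero---whence $f_j=h\in(f_1,\ldots,f_{j-1})^T$, contradicting the choice of $f_j$---or is a nonzero element of $J\setminus(f_1,\ldots,f_{j-1})^T$ of weight strictly less than $w_j$, contradicting the minimality of $w_j$. Either outcome gives the desired contradiction, and $J$ must be finitely generated.

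The only real obstacle is securing an element of $(f_i)^T$ of the \emph{exact} weight $w_j$, so that a single scalar cancellation lowers the weight; this is precisely the content of Lemma \ref{key lemma}, built on the $\Phi$-closure of T-ideals and on the behavior of weights under left and right multiplication by generators recorded in (\ref{multiplication of weight}) and (\ref{action on weight by varphi}). The remaining combinatorial ingredient, namely the partial well-orderedness of $([Y,Z],\preceq)$, has already been supplied by Proposition \ref{partially well ordered monomials}, so with both tools in hand the descent is essentially automatic and no further ingredient is needed.
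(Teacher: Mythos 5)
Your descent itself is sound and is essentially the paper's argument: the paper also combines the partial well-order of Proposition \ref{partially well ordered monomials} with the linear well-order $\leq$, and uses Lemma \ref{key lemma} to cancel the leading term of a minimal-weight element outside the candidate finitely generated T-ideal. Your reorganization (build a sequence $f_1,f_2,\ldots$ by minimal-weight choices and extract a comparable pair $w_i\preceq w_j$, instead of first fixing a finite dominating set of weights and then taking one minimal counterexample) is the standard equivalent form of the Higman--Cohen scheme, and that part needs no repair.

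The step that does not hold up is the reduction ``every nonzero element of $J$ lies in $G^2$ (there are no nontrivial identities of degree one).'' The theorem is claimed for an arbitrary field, in particular a finite one, and over a finite field T-ideals need not be homogeneous, so they may contain elements with a nonzero linear part without the variety being trivial. Concretely, over $K=\mathbb{F}_2$ the one-dimensional algebra $K$ with its usual multiplication is bicommutative and satisfies $x^2+x=0$; hence the nontrivial subvariety of $\mathfrak B$ it generates has $x_1^2+x_1$ in its T-ideal, and the image of this element in $G$ is $y_1z_1+x_1\notin G^2$, for which your weight $\text{wt}$ is not even defined, so your minimal-weight selection cannot be applied to it. (Even over an infinite field the T-ideal of the variety $\{0\}$ contains $x_1$.) This is why the paper inserts the preliminary reduction modulo $F^2$: $F/F^2$ is the free algebra of the variety $\mathfrak N_2$ of algebras with zero multiplication, and one reduces to T-ideals contained in $F^2(\mathfrak B)$, where the weight machinery lives. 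To make your write-up complete you need this extra step; for instance, if the image of $T(\mathfrak V)$ in $F/F^2$ is nonzero, then $T(\mathfrak V)$ contains an element $x_1+h(x_1)$ with $h\in F^2$, and $T(\mathfrak V)$ is generated as a T-ideal by this single element together with $T(\mathfrak V)\cap F^2$, to which your descent then applies. So: right method and a correct descent, but the claim that all of $J$ sits in $G^2$ is false as stated and the quotient-by-$F^2$ reduction (or an equivalent treatment of nonzero linear parts) is a genuinely needed ingredient, not an omissible remark.
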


\begin{proof}
Again, since the variety of all bicommutative algebras is defined by two identities,
it is sufficient to establish the finitely generation of the T-ideals in $F({\mathfrak B})$. The algebra $F({\mathfrak B})/F^2({\mathfrak B})$ is the free algebra
of the variety ${\mathfrak N}_2$ of all algebras with trivial multiplication defined by the polynomial identity $x_1x_2=0$.
Clearly, ${\mathfrak N}_2$ has no proper subvarieties. Hence, for the proof of the theorem it is sufficient to consider varieties $\mathfrak V$
with T-ideals $I=T({\mathfrak V})$ in $F^2({\mathfrak B})$. Let $I_1$ be the image of $I$ in the square $G^2$ of the algebra $G$. The set
$([Y,Z],\preceq)$ is partially well-ordered. Hence there is a finite set of polynomials $\{f^{(1)},\ldots,f^{(k)}\}$ in $I$ with images
$\{f^{(1)}_1,\ldots,f^{(k)}_1\}$ in $I_1$ with the property that for any $g\in I$ with image $g_1\in I_1$ there exists an $f^{(i)}$
such that $\text{wt}(f^{(i)}_1)\preceq \text{wt}(g_1)$. Let us assume that the polynomials $f^{(1)},\ldots,f^{(k)}$ do not generate the T-ideal $I$.
Since the set $([Y,Z],\leq)$ is well-ordered, there is a polynomial $g\in I\setminus (f^{(1)},\ldots,f^{(k)})^T$
such that the weight $\text{wt}(g_1)$ of its image in $I_1$ is minimal in the set $\text{wt}(I\setminus (f^{(1)},\ldots,f^{(k)})^T)_1$.
If $f^{(i)}$ is such that $\text{wt}(f^{(i)}_1)\preceq \text{wt}(g_1)$, then by Lemma \ref{key lemma} there exists an $h\in (f^{(i)})^T\subseteq I$
such that its image $h_1\in I_1$ satisfies $\text{wt}(h_1)=\text{wt}(g_1)$. If the coefficients of $\text{wt}(h_1)$ and $\text{wt}(g_1)$
in $h_1$ and $g_1$ are, respectively, $\mu$ and $\nu$, then the polynomial $\displaystyle u=g-\frac{\mu}{\nu}h$ belongs to $I$. If $u\not=0$,
then $u\in I\setminus (f^{(1)},\ldots,f^{(k)})^T$. But this is impossible because $\text{wt}(u_1)\prec\text{wt}(g_1)$ for its image $u_1\in G^2$
which contradicts to the minimality of $\text{wt}(g_1)$.
\end{proof}

\begin{remark}\label{generation as T-ideal in char p}
Again, as in Remarks \ref{number of generators of ideals} and \ref{generation as T-ideal in char 0}
we can ask about the number of generators of the T-ideals of $F({\mathfrak B})$ when the base field is of positive characteristic.
Since we work in the polynomial algebra $K[Y,Z]$ considered as a $K[X]$-bimodule we shall mention several results concerning the number of generators,
theory of Gr\"obner bases and other algorithmic problems:
Aschenbrenner, Hillar \cite{AH}, Hillar, Windfeldt \cite{HW}, Hillar, Sullivant \cite{HS}, Krone \cite{Kr}, and
Hillar, Krone, Leykin \cite{HKL}.
\end{remark}

\end{document}